\documentclass[11pt]{amsart}

\usepackage{amsmath,amssymb,amsthm}
\usepackage{subcaption}
\usepackage{graphicx}
\usepackage{hyperref}

\newtheorem{theorem}{Theorem}
\newtheorem{lemma}{Lemma}

\graphicspath{{figures/}}

\title[A Substitution for a 10-fold Symmetric Rhomb Tiling]{A Recognizable Substitution Rule for\\a 10-fold Symmetric Rhomb Tiling}
\author{Miki Imura}
\date{March 15, 2026}

\begin{document}

\begin{abstract}
We present a substitution rule for a rhomb tiling with 10-fold rotational symmetry.
The tiling is closely related to the Penrose rhomb tilings and can be obtained from the pentagrid construction.
We introduce a finite set of marked prototiles and describe an explicit substitution rule with inflation factor $\varphi^3$.
Our main result is that the substitution is recognizable, so that the hierarchical structure of the tiling can be uniquely recovered from local configurations.
Finally, we describe the relation between the tiling and the pentagrid construction.
\end{abstract}

\maketitle

\section{Introduction}

Substitution tilings provide a natural framework for describing hierarchical structure in non-periodic tilings.
A classical example is the Penrose tiling~\cite{Penrose1974}, which exhibits 5-fold symmetry and can be generated either by substitution rules or by projection methods such as the pentagrid construction.

\begin{figure}[htbp]
\centering
\includegraphics[width=\textwidth]{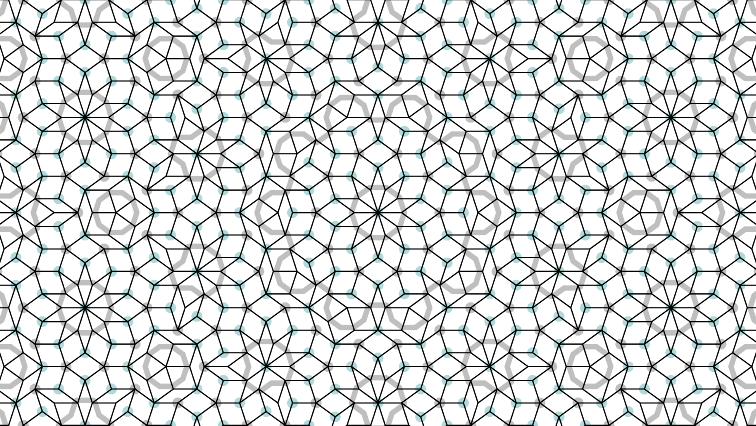}
\caption{A patch of the Seabed tiling.}
\label{fig:intro}
\end{figure}

In this paper we study the rhomb tiling shown in Figure~\ref{fig:intro}, which exhibits 10-fold rotational symmetry and is closely related to the Penrose rhomb tilings.
Motivated by its visual appearance, we call this tiling the \emph{Seabed tiling}.

Our main result is an explicit substitution rule for this tiling with inflation factor $\varphi^3$.
We prove that the substitution is recognizable, so that the hierarchical structure of the tiling can be uniquely recovered from local configurations.
As a consequence, the tiling space can be enforced by a finite set of local matching rules.

The paper is organized as follows.
Section~\ref{sec:prototiles} introduces the prototiles and their markings, and Section~\ref{sec:substitution} presents the substitution rule.
Section~\ref{sec:recognizability} proves recognizability of the substitution, and Section~\ref{sec:aperiodicity} shows that the tiling space can be enforced by finite local matching rules.
Section~\ref{sec:relation} describes the relation between the tiling and the pentagrid construction.
Section~\ref{sec:conclusion} concludes the paper.

\section{Prototiles}
\label{sec:prototiles}

The tiling considered in this paper is constructed from a finite set of rhomb prototiles.
There are six prototiles in total: three thin rhombs and three thick rhombs.

\begin{figure}[htbp]
\centering

\begin{subfigure}{0.32\linewidth}
\centering
\includegraphics{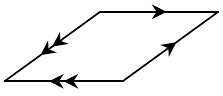}
\caption{Thin-I}
\end{subfigure}
\hfill
\begin{subfigure}{0.32\linewidth}
\centering
\includegraphics{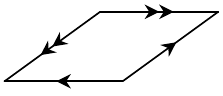}
\caption{Thin-II}
\end{subfigure}
\hfill
\begin{subfigure}{0.32\linewidth}
\centering
\includegraphics{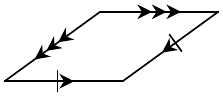}
\caption{Thin-III}
\end{subfigure}

\vspace{8pt}

\begin{subfigure}{0.32\linewidth}
\centering
\includegraphics{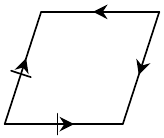}
\caption{Thick-I}
\end{subfigure}
\hfill
\begin{subfigure}{0.32\linewidth}
\centering
\includegraphics{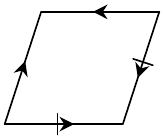}
\caption{Thick-II}
\end{subfigure}
\hfill
\begin{subfigure}{0.32\linewidth}
\centering
\includegraphics{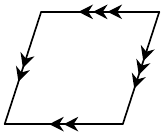}
\caption{Thick-III}
\end{subfigure}

\caption{The six rhomb prototiles with edge markings.}
\label{fig:prototiles-edge}

\end{figure}

Figure~\ref{fig:prototiles-edge} shows the prototiles together with their edge markings.
We refer to these tiles as \emph{Thin-I}, \emph{Thin-II}, \emph{Thin-III}, and \emph{Thick-I}, \emph{Thick-II}, \emph{Thick-III}.
Tiles meet edge-to-edge, and two edges may meet only if both the edge type and the orientation of the markings agree.

For visualization it is convenient to represent the edge types using graphical markings on the tiles.
Figure~\ref{fig:prototiles} shows an equivalent representation in which the edge types are indicated by such markings.

\begin{figure}[htbp]
\centering

\begin{subfigure}{0.32\linewidth}
\centering
\includegraphics{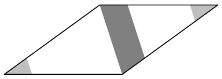}
\caption{Thin-I}
\end{subfigure}
\hfill
\begin{subfigure}{0.32\linewidth}
\centering
\includegraphics{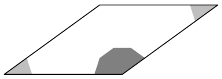}
\caption{Thin-II}
\end{subfigure}
\hfill
\begin{subfigure}{0.32\linewidth}
\centering
\includegraphics{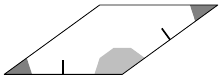}
\caption{Thin-III}
\end{subfigure}

\vspace{8pt}

\begin{subfigure}{0.32\linewidth}
\centering
\includegraphics{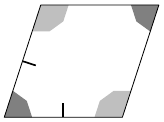}
\caption{Thick-I}
\end{subfigure}
\hfill
\begin{subfigure}{0.32\linewidth}
\centering
\includegraphics{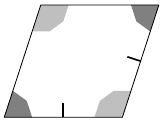}
\caption{Thick-II}
\end{subfigure}
\hfill
\begin{subfigure}{0.32\linewidth}
\centering
\includegraphics{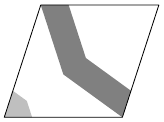}
\caption{Thick-III}
\end{subfigure}

\caption{An equivalent representation of the prototiles with graphical markings.}
\label{fig:prototiles}

\end{figure}

In the remainder of the paper we use this graphical representation, as it makes the local configurations involved in the recognizability argument easier to visualize.

\section{The Substitution Rule}
\label{sec:substitution}

We now describe the substitution rule defined on the prototiles introduced in the previous section.
The substitution inflates each tile by the factor $\varphi^3$, where $\varphi = \frac{1+\sqrt{5}}{2}$ is the golden ratio, and subdivides the inflated tile into smaller copies of the prototiles.
It is possible that substitutions with smaller inflation factors exist, but we do not investigate this question here.

\begin{figure}[htbp]
\centering

\begin{subfigure}{0.32\linewidth}
\centering
\includegraphics{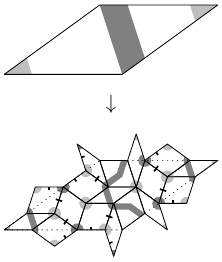}
\caption{Thin-I}
\end{subfigure}
\hfill
\begin{subfigure}{0.32\linewidth}
\centering
\includegraphics{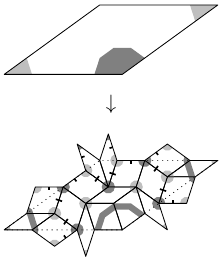}
\caption{Thin-II}
\end{subfigure}
\hfill
\begin{subfigure}{0.32\linewidth}
\centering
\includegraphics{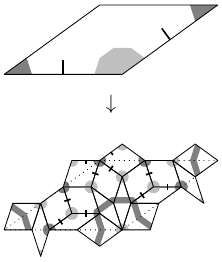}
\caption{Thin-III}
\end{subfigure}

\vspace{10pt}

\begin{subfigure}{0.32\linewidth}
\centering
\includegraphics{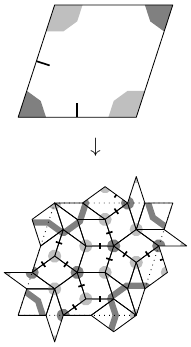}
\caption{Thick-I}
\end{subfigure}
\hfill
\begin{subfigure}{0.32\linewidth}
\centering
\includegraphics{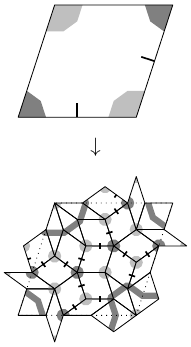}
\caption{Thick-II}
\end{subfigure}
\hfill
\begin{subfigure}{0.32\linewidth}
\centering
\includegraphics{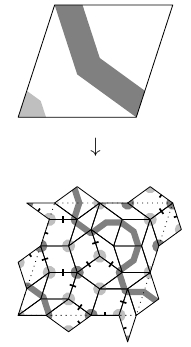}
\caption{Thick-III}
\end{subfigure}

\caption{Substitution rule for the six prototiles. Each prototile (top) is replaced by the corresponding patch (bottom).}
\label{fig:substitution}

\end{figure}

Figure~\ref{fig:substitution} shows the substitution rule.
Each prototile is replaced by a finite patch of prototiles whose boundary coincides with that of the inflated tile.
The subdivision respects the markings on the tiles, so that adjacent tiles agree along their edges.

The substitution acts on the six prototiles introduced in Section~\ref{sec:prototiles} and is completely determined by the subdivisions together with the orientation and edge markings of the prototiles.

\section{Recognizability of the Substitution}
\label{sec:recognizability}

A substitution is called \emph{recognizable} if every tiling admitted by the substitution can be uniquely decomposed into supertiles.
Recognizability plays an important role in the study of substitution tilings, as it ensures that the hierarchical structure of the tiling can be recovered from local configurations.

In this section we prove that the substitution rule introduced in Section~\ref{sec:substitution} is recognizable.
The key observation is that the markings on the tiles impose strong local constraints, which allow the position of supertile boundaries to be determined uniquely.

\begin{theorem}
\label{thm:recognizability}
The substitution defined in Section~\ref{sec:substitution} is recognizable.
\end{theorem}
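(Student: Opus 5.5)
The plan is to prove recognizability in its standard local form. There should be a radius $R>0$ such that, for every tiling $T$ in the hull of the substitution $\sigma$ and every tile $t\in T$, the patch of $T$ within distance $R$ of $t$ already determines two things: which supertile of $\sigma^{-1}(T)$ contains $t$, and where $t$ sits inside that supertile. Once this local statement holds, uniqueness of the decomposition of $T$ into supertiles is immediate. The inverse map $T\mapsto \sigma^{-1}(T)$ is then also continuous, and iterating gives a unique hierarchy at every level. I would also use that $\sigma$ is primitive, which can be read off from Figure~\ref{fig:substitution}. Together with finite local complexity, which holds because the tiles are rhombs meeting edge-to-edge with finitely many markings, this means only finitely many patches of radius $R$ occur, so the whole argument is a finite check.

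\textbf{Step 1: extract local data from the six patches.} From Figure~\ref{fig:substitution} I would record the $\varphi^3$-inflated edge of each prototile, together with the sequence of small edges and markings that subdivide it. Since the substituted patches agree along their boundaries, the supertile edges form a network of polygonal paths in $T$ made of small tile edges. The first thing to check is that these supertile-boundary paths carry a marking pattern that never occurs on an edge path running through the interior of any supertile patch, for instance a specific sequence of thin and thick tiles along the edge. If that holds, the collection of edges of $T$ lying on supertile boundaries is determined locally.

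\textbf{Step 2: identify supertile vertices.} Next I would show that the vertices of the supertiling are locally determined. Their vertex stars must come from the finite list of legal vertex stars of the six prototiles, inflated by $\varphi^3$. The approach is to enumerate all vertex configurations within a bounded radius in $\sigma(T)$, for example by generating $\sigma^k$ of every legal vertex star for small $k$. For each configuration I would then check whether the central vertex is a supertile corner, lies in the interior of a supertile edge, or lies in the interior of a supertile. Combining Steps 1 and 2, each supertile is recovered as a rhomb bounded by four recognized boundary paths. Its type (I, II or III) and its orientation are then read off from the markings of the small tiles along its edges. This works because the six patches in Figure~\ref{fig:substitution} are pairwise distinct even after rotation, a fact that can be checked directly.

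\textbf{Main obstacle.} The hardest part is Step 1 for thin and thick supertiles of the same shape that differ only in their markings. The edge subdivisions of Thin-I/II/III (and of Thick-I/II/III) may coincide along some edges. In the worst case, an interior edge path of one patch mimics a boundary path of another. To rule this out I would first try a rigidity argument: the markings force arrows consistently along the ten directions of the pentagrid, so the supertile edges are the lines of the inflated grid. As a fallback I would run a computer-assisted enumeration of all patches of $\sigma^3(\text{vertex stars})$ up to radius $R\approx 2\varphi^3$, and verify that no two legal patches with the same central configuration lead to different supertile placements.
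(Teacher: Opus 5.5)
Your proposal is a plan for a verification, not a proof: the one fact that carries the argument is never established. Step~1 rests on the claim that supertile-boundary paths carry a marking sequence that no interior edge path realizes. You introduce it as ``the first thing to check'' and continue ``if that holds'', and your main-obstacle paragraph concedes it may fail. Step~2 likewise defers the classification of vertex neighbourhoods to an enumeration that is not carried out. Finite local complexity does not close this gap. It makes the check finite once a radius $R$ is fixed, but the existence of such an $R$ is exactly what recognizability asserts, so a failed check at $R\approx 2\varphi^3$ would leave you with no conclusion. The rigidity fallback is a non sequitur: even granting oriented edge markings, in de Bruijn's duality a pentagrid line is dual to a ribbon of tiles, not to a path of tile edges, and nothing you state ties supertile boundaries to an ``inflated grid''. (Less seriously, pairwise distinctness of the six patches lets you read a supertile's type off its whole patch once its region is known, not off the markings along its edges alone.)

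The missing ingredient is an explicit bounded-radius criterion. The paper obtains one vertex-first, from the vertex markings, which your plan never uses. It sorts vertices into light, dark, unmarked and stripe vertices. Inspection of Figure~\ref{fig:substitution} then shows that each class of supertile vertex sits at a small vertex of a specific class with a distinctive neighbourhood. Supertile light vertices are exactly the light vertices all of whose neighbours are stripe vertices. Supertile dark vertices are exactly the unmarked vertices surrounded by five Thick-III tiles. Supertile unmarked vertices are exactly the dark vertices surrounded by five Thick-I tiles with five light neighbours along marked edges. Supertile stripe vertices are the remaining unmarked vertices with none of the former within graph distance~$3$ (Lemmas~\ref{lem:light}--\ref{lem:stripe}). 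This locates every supertile vertex from a bounded neighbourhood, and the boundaries follow from the fixed combinatorics of the supertiles. These lemmas are also justified by inspection, but they are explicit statements that can be checked against the six patches. You need a statement of that kind, for edge paths or for vertices, before your argument becomes a proof.

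Alternatively, you could avoid case analysis altogether. You already have primitivity and FLC, and non-periodicity is immediate: a period lattice $\Gamma$ of a $\sigma^k$-fixed tiling would satisfy $\varphi^{3k}\Gamma\subseteq\Gamma$, which is impossible since $\varphi^{3k}\notin\mathbb{Z}$. Solomyak's theorem that non-periodicity implies unique composition then gives recognizability. It has to be applied to a self-similar version of the rule, because the rhomb rule itself cannot be an exact subdivision: a straight edge of length $\varphi^3=2+\sqrt{5}$ is not a union of unit edges.
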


To analyze local configurations we distinguish several types of vertices determined by the graphical markings on the tiles.
Vertices surrounded by light vertex markings will be called \emph{light vertices}, and those surrounded by dark vertex markings will be called \emph{dark vertices}.
Vertices whose incident tiles carry no vertex marking will be called \emph{unmarked vertices}.
All remaining vertices will be called \emph{stripe vertices}.

\begin{lemma}
\label{lem:light}
A vertex is a light vertex of a supertile if and only if it is a light vertex adjacent only to stripe vertices.
\end{lemma}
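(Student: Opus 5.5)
We prove the two directions separately. Both arguments are a finite inspection of the six substituted patches in Figure~\ref{fig:substitution} and of the local configurations that can occur in an admitted tiling. The underlying idea is that a light vertex of a supertile, that is, a vertex of the inflated tiling carrying a light marking at level one, keeps a distinctive local signature after substitution. Every tile edge emanating from it lies on the boundary of an inflated tile. By the edge markings of Figure~\ref{fig:prototiles-edge}, the neighbouring vertices along such edges are exactly the vertices carrying stripe markings.

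\emph{Forward direction.} Let $v$ be a light vertex of a supertile. First, we note that in each patch of Figure~\ref{fig:substitution}, the corners of the inflated tile carrying a light marking are again covered by light markings of the small tiles. Therefore $v$ is a light vertex of the tiling. Second, we list, for each prototile and each light corner, the small tiles incident to that corner and their other vertices. Every small-tile vertex adjacent to $v$ lies on an edge of the inflated tile, at distance $1$ from the corner. Inspecting the six patches, these boundary vertices are stripe vertices. The reason is that the supertile's edge markings are realised, at the lower level, by a sequence of stripe markings along the inflated edge. Since the vertex star of $v$ is assembled from the corner pieces of the supertiles meeting at $v$, all neighbours of $v$ are stripe vertices.

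\emph{Reverse direction.} This is the main obstacle. Let $v$ be a light vertex all of whose neighbours are stripe vertices. We must show that $v$ cannot lie in the interior of a supertile or in the interior of a supertile edge. We plan to enumerate every light vertex occurring in the interior of one of the six patches, or on an inflated edge away from the corners. For each such vertex we exhibit a neighbour that is light, dark or unmarked, and hence not a stripe vertex.

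Interior vertices whose stars cross two patches also have to be covered. To handle them, we first enumerate the finitely many legal vertex stars at light vertices permitted by the matching of edge markings; by the 10-fold rhomb geometry these are only a few. We then check which of them have only stripe neighbours, and show that these stars occur only at supertile corners. Checking these configurations against the first-order superpatches (the patches of two adjacent substituted tiles) completes the case analysis.

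The delicate point is to ensure the enumeration of light vertex stars is complete. I would organise it by the number of thin versus thick rhombs around the vertex, with angles summing to $2\pi$ in multiples of $\pi/5$, and then use the edge-orientation constraint to discard stars whose markings do not match.
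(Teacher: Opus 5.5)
Your plan uses the same kind of argument as the paper, whose proof simply asserts both inclusions by inspection of the six patches. As you organise it, though, the inspection does not reach the statement.

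In the reverse direction, ruling out that $v$ lies in the interior of a supertile or of a supertile edge shows only that $v$ is a supertile vertex. The lemma asserts that it is a \emph{light} one. You need the converse of the first observation in your forward direction: a corner of a patch is covered by light small-tile markings only if the corner of the inflated tile is itself light. For the other three types this is what the `only if' halves of Lemmas~\ref{lem:dark}--\ref{lem:stripe} record: dark, unmarked and stripe supertile vertices appear at level $0$ as unmarked, dark and unmarked vertices. With this, a level-$0$ light vertex that is a supertile vertex has only light supertile corners, so it is light at level one.

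Your enumeration is also not local enough to decide the hypothesis. Whether a neighbour $w$ of $v$ is a stripe vertex depends on all tiles at $w$, not only the two that also contain $v$. So a list of legal vertex stars at light vertices cannot in general tell you which of them have only stripe neighbours; you need configurations of radius two. For the same reason two-tile superpatches are not enough: if $v$ lies on a supertile edge next to a supertile corner $c$, the type of the neighbour $c$ depends on all supertiles meeting at $c$. A clean repair is to check first that no supertile vertex is a stripe vertex at level $0$. This excludes every $v$ adjacent to a supertile corner. For $v$ that is neither a supertile corner nor adjacent to one, each neighbour's star lies in at most two supertiles, so two-tile superpatches then suffice.

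In the forward direction, you claim that every small edge at a supertile corner runs along the boundary of the inflated tile, so that all neighbours of $v$ lie on inflated edges at distance $1$ from the corner. The geometry does not give you this. A corner of angle $72^\circ$, $108^\circ$ or $144^\circ$ may be shared by several small rhombs, producing edges from $v$ into the interior of the supertile. Moreover $\varphi^3=2+\sqrt{5}$ is irrational, so a straight inflated edge is never a union of collinear unit edges. Hence your picture of the supertile's edge marking being `realised by a sequence of stripe markings along the inflated edge' cannot be literally correct. The same applies to the description in Section~\ref{sec:substitution} of patches whose boundary coincides with the inflated tile. You must check each neighbour of $v$ directly, interior ones included. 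For a neighbour on a supertile boundary, its star is split between two supertiles, so the check must range over every supertile that can lie across that boundary.
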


\begin{proof}
Inspection of the substitution rule shows that every light vertex adjacent only to stripe vertices occurs exactly as a light vertex of a supertile.
Conversely, every light vertex of a supertile has this local configuration.
\end{proof}

\begin{lemma}
\label{lem:dark}
A vertex is a dark vertex of a supertile if and only if it is an unmarked vertex surrounded by five tiles of type Thick-III.
\end{lemma}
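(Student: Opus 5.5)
The plan is to follow the same pattern as Lemma~\ref{lem:light} and prove the two implications separately. Both directions come from inspecting the six patches in Figure~\ref{fig:substitution} and tracking where the dark vertex markings of the level-one supertiles end up.

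For the forward direction, I would first list, for each prototile in Figure~\ref{fig:prototiles}, the corners that carry a dark vertex marking. The claim is that only Thick-III has such corners. A dark vertex of a supertile is a corner of the inflated tile at which the incident supertiles all carry dark markings. The angle at that corner is a multiple of $\pi/5$, so the five-tile condition forces it to be surrounded by five supertiles of type Thick-III, each meeting it at its acute angle $2\pi/5$. Next, I would read off from the Thick-III patch what the substituted tiling looks like at that acute corner. I expect a single small Thick-III tile, sitting there in the same orientation, whose corner marking is absent. Placing five rotated copies of this corner patch around the vertex then gives an unmarked vertex surrounded by five small Thick-III tiles, as required.

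The converse is the main obstacle. Here I must show that the local configuration of an unmarked vertex surrounded by five Thick-III tiles arises only at supertile corners. My approach is to enumerate every place inside the six substitution patches, and along the shared edges between adjacent patches, where a small Thick-III tile meets a vertex at an unmarked acute corner.

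1. For each such occurrence, I would check that it lies on the corner of an inflated Thick-III, not in its interior or on an edge.
2. Interior occurrences should be ruled out because the neighbouring small tiles inside the patch are not all Thick-III, or because the corner is marked.
3. Edge occurrences should be ruled out using the edge markings. These fix which supertile lies across the edge, and the resulting neighbours never complete a ring of five Thick-III tiles.

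Once every candidate has been excluded or identified with a supertile corner, the vertex is known to be a corner of five Thick-III supertiles. By the first step, such a vertex is exactly a dark vertex of a supertile. Because the check is finite, it can also be confirmed mechanically by listing all vertex stars in the level-one and level-two supertiles.
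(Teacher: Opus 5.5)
Your plan is the same inspection argument the paper uses, but the detail you add rests on a premise the paper contradicts. You claim that only Thick-III carries dark corner markings. Yet Lemma~\ref{lem:unmarked} refers to a dark vertex surrounded by five Thick-I tiles, so the acute corners of Thick-I can be dark. The supertiles of an admitted tiling again form an admitted tiling, so a dark supertile vertex can be surrounded by five Thick-I supertiles. Your forward direction never inspects the corresponding corner of the Thick-I patch, nor any other star of dark-marked corners. The step ``the angle is a multiple of $\pi/5$, so the five-tile condition forces five Thick-III supertiles'' does not repair this. Every corner angle of these rhombs is a multiple of $\pi/5$, and the number five is taken from the conclusion about the small tiles, so the inference is circular. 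The forward direction has to start from the complete list, read off from Figure~\ref{fig:prototiles}, of vertex stars all of whose corners carry dark markings. It must then check the substituted corner for each of them.

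The converse inherits the problem. Your step~1 requires each unmarked acute Thick-III corner in the configuration to lie at a corner of an inflated Thick-III. But the lemma asserts that dark vertices surrounded by Thick-I supertiles also carry five small Thick-III tiles, and those tiles lie at corners of inflated Thick-I tiles. So genuine occurrences would fail your check. Your final inference, ``a corner of five Thick-III supertiles, hence dark by the first step'', uses the reverse of what your first step claimed, and it is false as stated. The lemma's own configuration has five Thick-III tiles meeting at acute corners (five angles summing to $2\pi$ force $2\pi/5$ each) that carry no vertex marking. So Thick-III has an unmarked acute corner, and by self-similarity five Thick-III supertiles can also meet at an unmarked supertile vertex. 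The enumeration must record \emph{which} corner of each incident supertile the vertex occupies and show that it is a dark-marked one. The types of the incident supertiles alone do not decide this.
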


\begin{proof}
Inspection of the substitution rule shows that every unmarked vertex surrounded by five Thick-III tiles occurs exactly as a dark vertex of a supertile.
Conversely, every dark vertex of a supertile has this local configuration.
\end{proof}

\begin{lemma}
\label{lem:unmarked}
A vertex is an unmarked vertex of a supertile if and only if it is a dark vertex surrounded by five tiles of type Thick-I and adjacent via marked edges to five light vertices.
\end{lemma}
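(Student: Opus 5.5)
The proof follows the pattern of Lemmas~\ref{lem:light} and~\ref{lem:dark}: a finite inspection of the six substituted patches in Figure~\ref{fig:substitution}, organised so that both directions can be checked mechanically. First I fix what an ``unmarked vertex of a supertile'' means. It is a vertex $v$ of the inflated tiling $\varphi^3 T'$ such that the supertiles meeting at $v$ carry no vertex marking there. Such a vertex is a corner of the inflated prototiles at which the corresponding prototile corners are unmarked. So the first step is to list, from Figure~\ref{fig:prototiles}, every corner type of the six prototiles that carries no vertex marking. Then I list the cyclic sequences of such corners that can close up around a vertex, i.e.\ with angles summing to $2\pi$ and compatible edge markings. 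This gives a finite list of admissible supertile vertex stars.

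\textbf{Forward direction.} For each unmarked corner of each prototile, I read off from Figure~\ref{fig:substitution} the small tiles sitting at the corresponding corner of the substituted patch. The claim to verify is that this corner is always occupied by a dark vertex marking, carried by a Thick-I tile whose acute (or appropriate) angle sits at the corner. Gluing the contributions of the supertiles around $v$ then yields five Thick-I tiles around $v$. Next I check the first ring of small vertices. Following each of the five marked edges emanating from $v$ inside the substituted patches, the far endpoint is a light vertex. This holds because each such endpoint lies in the interior of a supertile edge or of a supertile, where the marking can be read directly from the patch. I also have to confirm that this endpoint is a genuine light vertex in the sense of the definition, which requires looking at the tiles surrounding it. Those tiles may belong to neighbouring supertiles, but since the endpoint lies on a supertile edge, the edge-matching condition pins them down.

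\textbf{Converse.} Let $v$ be a dark vertex surrounded by five Thick-I tiles and joined by marked edges to five light vertices. By Lemmas~\ref{lem:light} and~\ref{lem:dark}, the positions of light and dark supertile vertices are already recognised. So it suffices to enumerate every location inside the six substituted patches, and along their boundaries, where a dark vertex with five surrounding Thick-I tiles can occur. In each location other than a supertile corner, I exhibit a violation: one of the five marked neighbours is not a light vertex. Typically it is a stripe vertex or a dark vertex coming from the inner structure of a patch. This violation shows that the extra condition on the five neighbours is exactly what separates supertile corners from interior occurrences.

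\textbf{Main obstacle.} The hard step is the converse, specifically the occurrences of the five-Thick-I star that straddle the boundary between two or more supertiles. There the surrounding tiles are not determined by a single patch, so the check has to run over all legal adjacencies of supertile pairs along a common edge. To control this I would first use edge markings to show that any such star whose centre lies on a supertile edge but not at a corner forces a specific pair of adjacent supertiles. Then I would check the five neighbours only in that pair, reducing the case analysis to a finite table taken from Figure~\ref{fig:substitution}.
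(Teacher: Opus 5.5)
Your plan is essentially the paper's argument: the paper's proof only asserts that inspecting the substituted patches of Figure~\ref{fig:substitution} gives both directions, and you spell out how that finite inspection would be organised. One case to add in your converse: Lemmas~\ref{lem:light} and~\ref{lem:dark} only exclude light and dark supertile corners, so you must also check directly that a stripe supertile corner never carries a dark five-Thick-I star, since Lemma~\ref{lem:stripe} depends on the present lemma and cannot be cited.
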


\begin{proof}
Inspection of the substitution rule shows that every dark vertex surrounded by five Thick-I tiles and adjacent via marked edges to five light vertices occurs exactly as an unmarked vertex of a supertile.
Conversely, every unmarked vertex of a supertile has this local configuration.
\end{proof}

\begin{lemma}
\label{lem:stripe}
A vertex is a stripe vertex of a supertile if and only if it is an unmarked vertex that is not a dark vertex of a supertile and such that no supertile vertex identified in Lemmas~\ref{lem:light}--\ref{lem:unmarked} occurs within graph distance at most~3.
\end{lemma}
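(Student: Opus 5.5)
We will prove the two implications separately. In both directions we follow the pattern of Lemmas~\ref{lem:light}--\ref{lem:unmarked}: we classify the finitely many vertex configurations that occur, inspecting the six patches of Figure~\ref{fig:substitution} together with the finitely many ways two or three supertiles can meet along an edge or at a vertex. The new feature is that the criterion is not a single vertex star. It is a negative condition on a neighbourhood of radius~$3$, so the inspection must be organised around distances in the edge graph.

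\emph{Forward direction.} Let $v$ be a stripe vertex of a supertile. We first check from the substitution patches that the image of a stripe vertex carries no vertex marking, so $v$ is unmarked. It is not a dark vertex of a supertile, since supertile vertices have a single type. It remains to bound how close $v$ can be to other supertile vertices. Every edge of a supertile has length $\varphi^3$ times a unit edge. For each of the six patches we compute the graph distance, inside the substituted patch, between any two corners of the inflated rhomb. We expect this minimum to exceed~$3$; the shortest candidate is the short diagonal of the substituted Thin rhombs, which should be the binding case. Since distinct supertile vertices adjacent to $v$ lie at corners of supertiles containing $v$, every supertile vertex other than $v$ is then at distance at least~$4$. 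In particular, none of the vertices identified in Lemmas~\ref{lem:light}--\ref{lem:unmarked} lies within distance~$3$.

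\emph{Reverse direction.} Let $v$ be an unmarked vertex that is not a dark vertex of a supertile (checked via Lemma~\ref{lem:dark}) and that has no light, dark or unmarked supertile vertex within distance~$3$. We must show $v$ is a supertile vertex. Assume it is not. Then $v$ lies either in the interior of a substituted tile or in the interior of a supertile edge. We enumerate the unmarked vertices occurring in these positions in the six patches and along the shared edges. For each such vertex we exhibit a supertile corner within distance~$3$ whose type is light, dark or unmarked. We then need to rule out the case where every corner within distance~$3$ is a stripe vertex. We plan to do this using the marking pattern, which should force at least one non-stripe corner near every interior unmarked vertex. This is where the constant~$3$ must be shown to be large enough.

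The main obstacle is this last case analysis. The radius-$3$ neighbourhoods straddle several supertiles, so the inspection is over patches of the level-two substitution rather than single prototile images. We would first try to reduce to the legal vertex stars of the supertiling, of which there should be a finite list akin to the eight Penrose vertex stars. We would then verify the claim for each star in turn, ideally by a computer-assisted enumeration. If the radius-$3$ condition fails in some configuration, the fallback is to refine the hypothesis using the already identified dark vertices of Lemma~\ref{lem:dark}. This works because any unmarked vertex adjacent to five Thick-III tiles is excluded outright.
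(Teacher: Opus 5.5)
\textbf{Gap in the forward direction.} Your forward direction assumes that any two supertile corners are at graph distance at least $4$. This fails in exactly the case you identify as binding. The obtuse corners of an inflated Thin rhomb are at Euclidean distance $\varphi^3\cdot 2\sin 18^\circ=\varphi^2\approx 2.618<3$, and they are separated along an edge direction $u$. Moreover $\varphi^2u=u+u'+u''$, where $u',u''$ are the unit edge vectors at $\pm 36^\circ$ to $u$, because $1+2\cos 36^\circ=1+\varphi=\varphi^2$. So a three-edge path between the two corners is geometrically available. In a Penrose rhomb tiling, graph distance equals the number of pentagrid lines separating two vertices, and two vertices at this displacement are always exactly $3$ apart. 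Nothing in your argument excludes the same for the Seabed tiling, which Section~\ref{sec:relation} says also comes from a pentagrid.

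Your computation also bounds the wrong quantity. A distance measured inside one substituted patch is only an upper bound for the graph distance in the tiling, since shortest paths may leave the patch. So even a favourable in-patch computation would not give ``distance at least $4$'' in the tiling.

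\textbf{What the forward direction actually needs.} It cannot come from separation of supertile corners; it has to use the markings. You need a check, read off from the prototile markings at the supertile level, covering the following case. Whenever a stripe supertile vertex is an obtuse corner of a Thin supertile, the opposite obtuse corner must not be a light, dark or unmarked supertile vertex. The same check is needed for any other supertile vertex reachable in three steps. The paper treats this as an inspection claim in its ``conversely'' sentence. Your proposal replaces it with a marking-independent statement that does not hold.

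\textbf{Reverse direction.} This part is the paper's argument: the single inspection claim that every unmarked non-supertile vertex lies within distance $3$ of a vertex from Lemmas~\ref{lem:light}--\ref{lem:unmarked}. You should still finish by elimination. By Lemmas~\ref{lem:light} and~\ref{lem:unmarked}, light and unmarked supertile vertices are light and dark at tile level. Hence an unmarked $v$ that is a supertile vertex but not a dark one is a stripe vertex. You should also drop the fallback of refining the hypothesis, since that would prove a different statement.
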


\begin{proof}
Consider the unmarked vertices of the tiling.
By Lemma~\ref{lem:dark}, some of them are exactly the dark vertices of supertiles.
Among the remaining unmarked vertices, inspection of the substitution rule shows that every vertex that is not a vertex of a supertile lies within graph distance at most~3 of a supertile vertex identified in Lemmas~\ref{lem:light}--\ref{lem:unmarked}.
Therefore, after excluding the unmarked vertices identified in Lemma~\ref{lem:dark} and all unmarked vertices lying within graph distance at most~3 of the supertile vertices identified in Lemmas~\ref{lem:light}--\ref{lem:unmarked}, the remaining vertices are exactly the stripe vertices of supertiles.
Conversely, every stripe vertex of a supertile satisfies these conditions.
\end{proof}

Together, Lemmas~\ref{lem:light}--\ref{lem:stripe} identify all supertile vertices from local configurations.

\begin{proof}[Proof of Theorem]
By Lemmas~\ref{lem:light}--\ref{lem:stripe}, every vertex of every supertile can be identified from local configurations.
Since the edges of each supertile connect these vertices in a fixed combinatorial pattern, the boundary of every supertile is uniquely determined.
Therefore every tiling admitted by the substitution admits a unique decomposition into supertiles, and the substitution is recognizable.
\end{proof}

\section{Finite Matching Rules}
\label{sec:aperiodicity}

We now show that the tiling space generated by the substitution can be enforced by a finite set of local matching rules.

\begin{lemma}
\label{lem:primitive}
The substitution defined in Section~\ref{sec:substitution} is primitive.
\end{lemma}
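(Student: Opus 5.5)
To prove Lemma~\ref{lem:primitive}, I will use the standard criterion. A substitution on the prototile set $\{\text{Thin-I},\text{Thin-II},\text{Thin-III},\text{Thick-I},\text{Thick-II},\text{Thick-III}\}$ is primitive if and only if its $6\times 6$ substitution matrix $M$ has some power $M^k$ with all entries strictly positive. Here $M_{ij}$ is the number of tiles of type $i$ in the patch replacing a tile of type $j$, read off from Figure~\ref{fig:substitution}. Since tiles related by rotation or reflection are counted as the same type, $M$ is well defined.

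The first step is to tabulate $M$ by counting, in each of the six patches of Figure~\ref{fig:substitution}, how many tiles of each marked type occur. Because the inflation factor is $\varphi^3$, every patch contains many tiles. The thick patches have area $\varphi^6$ times a thick rhomb, so each contains dozens of tiles. I therefore expect most entries of $M$ already to be positive. As a sanity check on the count, the areas must balance: if $a=\sin(\pi/5)$ and $b=\sin(2\pi/5)$ are the thin and thick areas, then for each column $j$ the weighted sum of thin counts times $a$ plus thick counts times $b$ must equal $\varphi^6$ times the area of tile $j$. Equivalently, $(a,a,a,b,b,b)$ is a left eigenvector of $M$ with eigenvalue $\varphi^6$.

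The second step is to pass from $M$ to positivity of a power. I will form the directed graph $G$ on the six types, with an edge $j\to i$ whenever $M_{ij}>0$. It suffices to show that $G$ is strongly connected and has a self-loop, because then $M^k>0$ for $k\le 5$. A self-loop appears as soon as some patch contains a tile of its own type, which is very likely, for example a Thick-III inside the Thick-III patch. Strong connectivity should follow if a single type, say Thick-III, appears in all six patches and its own patch contains every type. In that case $M^2>0$ directly.

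The main obstacle is purely the reliability of the count from the figures: the marked types within a patch must be distinguished correctly, especially Thin-I/II/III, whose markings differ subtly. The proof is finite and mechanical, but it depends on careful inspection. If some thin type fails to appear in the thick patches, I will instead trace a longer cycle through $G$ and verify aperiodicity of $G$ via the self-loop, concluding $M^k>0$ for some explicit small $k$.
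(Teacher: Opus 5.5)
Your plan takes the same approach as the paper, whose proof simply asserts that inspecting Figure~\ref{fig:substitution} shows the $6\times 6$ substitution matrix is primitive; your matrix tabulation, area check and graph criterion spell out that inspection. One small correction: strong connectivity plus a single self-loop on six types guarantees $M^{k}>0$ in general only for $k\ge 2\cdot 6-2=10$, not $k\le 5$ (a $6$-cycle with one loop shows this bound is sharp), though this does not affect the conclusion that $M$ is primitive.
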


\begin{proof}
Inspection of the substitution rule shows that every prototile appears in sufficiently high iterates of the substitution of every other prototile.
Equivalently, the substitution matrix is primitive.
\end{proof}

\begin{theorem}
\label{thm:matching-rules}
The tiling space generated by the substitution can be enforced by a finite set of local matching rules.
\end{theorem}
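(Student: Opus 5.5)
The plan is to invoke the Goodman-Strauss theorem: every primitive, recognizable substitution tiling space in the plane with finitely many tile types up to rigid motion is enforceable by finite local matching rules, after suitably decorating the prototiles. Lemma~\ref{lem:primitive} supplies primitivity and Theorem~\ref{thm:recognizability} supplies recognizability. Finite local complexity holds automatically, since the tiles are rhombs meeting edge-to-edge with edge vectors among the ten tenth-roots-of-unity directions. The proof therefore reduces to checking the hypotheses and then spelling out what the resulting matching rules are.

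Concretely, I would carry out the steps in this order. First, I would record that the tiles of the Seabed tiling occur in finitely many orientations (multiples of $\pi/5$), so the substitution is a self-similar substitution with finitely many prototiles up to translation. Second, I would check that the substitution of Section~\ref{sec:substitution} is compatible with edge-to-edge adjacency: it is a tiling substitution whose supertile boundaries are unions of tile edges. Third, I would cite Lemma~\ref{lem:primitive} and Theorem~\ref{thm:recognizability}. Fourth, I would apply Goodman-Strauss's construction, which enlarges the tile set by labels that encode each tile's position in its supertile and in its supertile's supertile. Edge labels, i.e.\ matching rules, then force every tiling by the decorated tiles to be hierarchically composed into supertiles at all levels. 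Forgetting the labels recovers exactly the substitution tiling space.

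An alternative, more self-contained route uses the local derivability in Lemmas~\ref{lem:light}--\ref{lem:stripe}. The matching rule would be the finite collection of allowed patches of radius $R$ appearing in the tiling. Here $R$ is at least the radius needed to identify supertile vertices, namely graph distance $3$ plus one ring. Given any tiling satisfying these rules, the lemmas would produce a consistent supertile decomposition. Rescaling by $\varphi^{-3}$ would give a tiling that again satisfies the rules, and iterating would show that every patch is legal. The hardest step here is showing that the lemmas still apply in an arbitrary locally-legal tiling, not just in genuine substitution tilings. The decomposition must be both well defined and consistent, so that the supertiles actually tile and form a legal tiling. This is exactly the difficulty Goodman-Strauss's theorem handles in general.

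The main obstacle is therefore precisely this passage from local legality to global hierarchy. I would resolve it by appealing to Goodman-Strauss rather than verifying it directly. I would also remark that matching rules for a marked-tile version are in general unavoidable, because decorations beyond the edge markings in Figure~\ref{fig:prototiles-edge} may be required.
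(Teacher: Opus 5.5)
Your proposal follows the paper's proof: Lemma~\ref{lem:primitive} supplies primitivity, Theorem~\ref{thm:recognizability} supplies recognizability, and the theorem of Goodman--Strauss~\cite{GoodmanStrauss1998} then gives finite (decorated) matching rules; your self-contained alternative via Lemmas~\ref{lem:light}--\ref{lem:stripe} is not needed. Your extra checks of finite local complexity and of supertile boundaries being unions of tile edges go beyond the paper's one-line argument and are worth keeping, because Goodman--Strauss's theorem is stated under a hereditary-edges/sibling-edge-to-edge condition rather than under recognizability.
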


\begin{proof}
By Lemma~\ref{lem:primitive} the substitution is primitive, and by Theorem~\ref{thm:recognizability} it is recognizable.
A theorem of Goodman--Strauss~\cite{GoodmanStrauss1998} therefore implies that the tiling space can be enforced by a finite set of local matching rules.
\end{proof}

\section{Relation to the Pentagrid Construction}
\label{sec:relation}

The tiling considered in this paper is closely related to the pentagrid construction introduced by de Bruijn~\cite{deBruijn1981}, a method for constructing Penrose rhomb tilings.

In the pentagrid construction the plane is intersected by five families of equally spaced parallel lines.
The directions of the families differ by angles of $72^\circ$.
The intersections of these line families determine rhomb tiles that together form a tiling of the plane by thin and thick rhombs.

\begin{figure}[htbp]
\centering

\begin{subfigure}{0.48\linewidth}
\centering
\includegraphics[width=\linewidth]{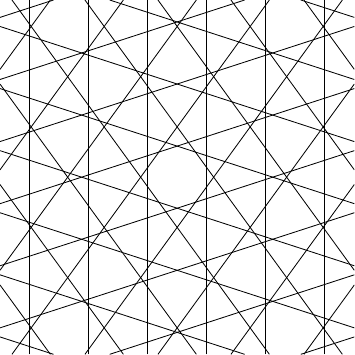}
\caption{A pentagrid.}
\label{fig:pentagrid-grid}
\end{subfigure}
\hfill
\begin{subfigure}{0.48\linewidth}
\centering
\includegraphics[width=\linewidth]{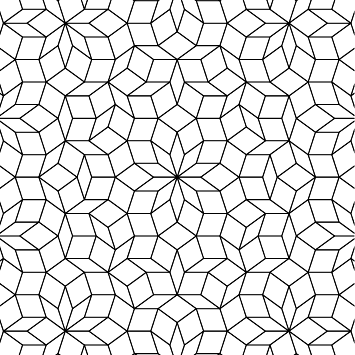}
\caption{The corresponding rhomb tiling.}
\label{fig:pentagrid-tiling}
\end{subfigure}

\caption{The pentagrid construction.}
\label{fig:pentagrid}

\end{figure}

Figure~\ref{fig:pentagrid-grid} shows the pentagrid.
Each intersection of two lines corresponds to a rhomb tile, producing the rhomb tiling shown in Figure~\ref{fig:pentagrid-tiling}.
Different choices of offsets for the line families produce different tilings.
In particular, suitable choices of offsets produce the Penrose rhomb tilings (P3 tilings).

The tiling considered in this paper arises from the pentagrid construction for other suitable choices of offsets.
For such offsets the resulting tilings have local configurations that agree with the marked prototiles introduced in Section~\ref{sec:prototiles}.
In this way the Seabed tiling arises naturally within the pentagrid framework.

Thus the substitution tiling studied in this paper may be viewed as a substitution realization of tilings arising from the pentagrid construction.

\section{Conclusion}
\label{sec:conclusion}

In this paper we introduced a substitution rule for a rhomb tiling with 10-fold rotational symmetry, which we refer to as the Seabed tiling.
The substitution acts on a finite set of marked prototiles and has inflation factor $\varphi^3$.

Our main result is that the substitution is recognizable.
As a consequence, the hierarchical structure of the tiling can be uniquely recovered from local configurations.
Together with the primitivity of the substitution, this result implies that the associated tiling space can be enforced by a finite set of local matching rules.

We also described the relation between the Seabed tiling and the pentagrid construction of de Bruijn.
Different choices of offsets in the pentagrid construction lead to different rhomb tilings.
For suitable choices of offsets the resulting tilings have local configurations that agree with the marked prototiles introduced in this paper.

\bibliographystyle{unsrt}
\bibliography{references}

\begin{thebibliography}{1}

\bibitem{Penrose1974}
Roger Penrose.
\newblock The role of aesthetics in pure and applied mathematical research.
\newblock {\em Bulletin of the Institute of Mathematics and its Applications},
  10(2):266--271, 1974.

\bibitem{GoodmanStrauss1998}
Chaim Goodman-Strauss.
\newblock Matching rules and substitution tilings.
\newblock {\em Annals of Mathematics}, 147(1):181--223, 1998.

\bibitem{deBruijn1981}
N.~G. de~Bruijn.
\newblock Algebraic theory of {Penrose's} non-periodic tilings of the plane.
  {I}, {II}.
\newblock {\em Indagationes Mathematicae (Proceedings)}, 84(1):39--66, 1981.

\end{thebibliography}

\end{document}